\newlength{\bracewidth}
\newcommand{\K}{\mathbb{K}}
\numberwithin{equation}{section}
\newtheorem{theorem}{Theorem}[section]
\newtheorem{proposition}[theorem]{Proposition}
\theoremstyle{definition}
\newtheorem{example}[theorem]{Example}
\begin{document}


\title{A commutative algebraic approach to the fitting problem}

\author{\c Stefan O. Toh\v aneanu}
\address{Department of Mathematics\\
The University of Western Ontario\\
London, ON N6A 5B7, Canada}
\email{stohanea@uwo.ca}
\urladdr{http://www.math.uwo.ca/$\sim$stohanea/}

\subjclass[2000]{Primary 52C35; Secondary 13P25, 13P20}
\keywords{index of nilpotency, fat points, minimum distance}

\begin{abstract}
Given a finite set of points $\Gamma$ in $\mathbb P^{k-1}$ not all contained in a hyperplane, the ``fitting problem'' asks what is the maximum number $hyp(\Gamma)$ of these points that can fit in some hyperplane and what is (are) the equation(s) of such hyperplane(s). If $\Gamma$ has the property that any $k-1$ of its points span a hyperplane, then $hyp(\Gamma)=nil(I)+k-2$, where $nil(I)$ is the index of nilpotency of an ideal constructed from the homogeneous coordinates of the points of $\Gamma$. Note that in $\mathbb P^2$ any two points span a line, and we find that the maximum number of collinear points of any given set of points $\Gamma\subset\mathbb P^2$ equals the index of nilpotency of the corresponding ideal, plus one.
\end{abstract}
\maketitle


\section{Introduction}
Let $\K$ be any field, and let $\Gamma\subseteq \mathbb P_{\mathbb K}^{k-1}$ be a finite reduced set of points, not all contained in a hyperplane. Let $hyp(\Gamma)$ be the maximum number of points of $\Gamma$ contained in some hyperplane.

Computationally, the ``fitting problem'' (or ``exact fitting problem'') asks for effective methods or algorithms to compute this number and to find the equation of the hyperplane. If $\Gamma$ is {\em $(k-2)-$generic} (i.e., any $k-1$ of the points span a hyperplane)\footnote{In \cite{ed} and \cite{gor}, the set of points $\Gamma$ is in the affine space $\mathbb A^{k-1}$. Nevertheless we can embed it into $\mathbb P^{k-1}$ by adding to each point of $\Gamma$ an extra coordinate that equals to $1$.}, by \cite{ed} (or \cite{gor}, Algorithm MinN1), the problem of finding the hyperplane can be solved in $O(|\Gamma|^{k-1})$ time. If one knows $hyp(\Gamma)$, in \cite{gor}, Corollary 3.3 is presented an algorithm that finds the points in this hyperplane in $O(\min\{\frac{|\Gamma|^{k-1}}{hyp(\Gamma)^{k-2}} \log(\frac{|\Gamma|}{hyp(\Gamma)}),|\Gamma|^{k-1}\})$ time.

The fitting problem is in direct connection with the computation of the minimum distance $d$ of the equivalence class of linear codes with generating matrix having as columns the coordinates of the points; the points can be placed as columns in any order, and homogeneous coordinates are the same up to multiplication by a nonzero constant\footnote{For background on linear codes we recommend \cite{clo}.}. The connection is that $d=|\Gamma|-hyp(\Gamma)$. With this, \cite{gls}, \cite{h}, \cite{toh1}, \cite{toh2}, \cite{toh3}, or \cite{tohVan} show that the minimum distance gives bounds for homological invariants of zero-dimensional projective schemes. We should mention also that the hyperplanes that contain $hyp(\Gamma)$ points of $\Gamma$ are in one-to-one correspondence with the (projective) codewords of minimum weight of the class of linear codes constructed from $\Gamma$: the coefficients of the linear form defining such a hyperplane are the coefficients of the linear combination of the rows of the generating matrix of the linear code that will give the codeword of minimum weight. Therefore, Lemma 2.2 in \cite{toh2} allows us to determine these hyperplanes by finding the minimal primes of a certain ideal, which is very simple once we know $hyp(\Gamma)$.

In this paper we link $hyp(\Gamma)$ to the index of nilpotency of an ideal generated by products of linear forms. We can do this if $\Gamma\subset\mathbb P^{k-1}$ is $(k-2)-$generic. This restriction does not occur if $k-1=2$, and therefore we give a new interpretation to the fitting problem of any set of points in the plane.

The article is structured as follows. In the next section we generalize a result of Schenck (\cite{sch}, Lemma 3.1) known so far to be true only for $\mathbb P^2$. This result with the results of Davis and Geramita (see \cite{dg}) was essential to show that the Orlik-Terao algebra of an arrangement of lines in $\mathbb P^2$ is the homogeneous coordinate ring associated to a nef on the blowup of $\mathbb P^2$ at the singularities of the arrangement (see \cite{sch} for more details). In the final part we discuss about the index of nilpotency of fat points and we prove our main result Theorem \ref{main}.

The goal of this paper is to have an understanding of the fitting problem from a commutative algebraic point of view. Therefore our effort is directed towards presenting this abstract approach with as many details as possible also for a nonspecialist, leaving the analysis of the efficiency of the possible algorithms that may be created from our exposition to the experts in the field.

\section{A fat point scheme constructed from hyperplane arrangements}

Let $\mathcal A$ be a central essential hyperplane arrangement in $V=\mathbb K^k$. Suppose $\mathcal A=\{H_1,\ldots,H_n\},$ and each $H_i$ is the vanishing of a linear form $L_i\in Sym(V^*)=\mathbb K[x_1,\ldots,x_k]$. ``Central'' means that all the hyperplanes of $\mathcal A$ pass through the origin, and ``essential'' means that the rank of $\mathcal A$ is $k$ (i.e., $codim(H_1\cap\cdots\cap H_n)=k$). For more details and background on hyperplane arrangements, we recommend \cite{ot}.

To any hyperplane arrangement one can associate the lattice of intersection $L(\mathcal A)$, which is a lattice built on the intersections of hyperplanes, with levels $L_j(\mathcal A)$: $X=H_{i_1}\cap\cdots\cap H_{i_s}\in L_j(\mathcal A)$ if and only if $codim(X)=j$. $X$ is called a \textit{flat of rank $j$}, and $s$, which is the number of hyperplanes that contain $X$, will be denoted $\nu(X)$. Flats of rank $k-1$ are called \textit{coatoms}.

We are interested in a special class of hyperplane arrangements. A central essential hyperplane arrangement $\mathcal A\subset\mathbb K^k$ will be called \textit{$k-2$ generic} if and only if any $k-1$ of the linear forms $L_i$ defining the hyperplanes of $\mathcal A$ are linearly independent. Observe that all arrangements of rank 3 are $k-2$ generic (unless they are multiarrangements).

For a hyperplane arrangement $\mathcal A\subset\mathbb K^k$, define $$I_j(\mathcal A)=\langle \{L_{i_1}\cdots L_{i_j}:1\leq i_1<\cdots<i_j\leq n\}\rangle\subset R:=\mathbb K[x_1,\ldots,x_k],$$ the ideal generated by all the distinct $j$ products of the linear forms defining the hyperplanes of $\mathcal A$.

With all of these we can generalize the result of Schenck to arbitrary rank. But first, one more definition. Let $I$ be a homogeneous ideal in the polynomial ring $R:=\mathbb K[x_1,\ldots,x_k]$. The \textit{saturation of $I$} is the ideal $$I^{sat}=\{f\in R|f\in I:\langle x_1,\ldots,x_k\rangle^{n(f)}\mbox{ for some }n(f)\}.$$

\begin{proposition}\label{Schenck} Let $\mathcal A$ be a central essential $k-2$ generic arrangement of $n$ hyperplanes in $\mathbb K^k$. Then

$$I_{n-k+2}(\mathcal A)=\bigcap_{X\in L_{k-1}(\mathcal A)}I(X)^{\nu(X)-k+2}.$$
\end{proposition}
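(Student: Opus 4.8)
The plan is to prove the two inclusions separately, using the $k-2$ genericity of $\mathcal A$ to control both sides. Write $J=I_{n-k+2}(\mathcal A)$ and $K=\bigcap_{X\in L_{k-1}(\mathcal A)}I(X)^{\nu(X)-k+2}$. The key structural fact I would use repeatedly is that because $\mathcal A$ is $k-2$ generic, every coatom $X=H_{i_1}\cap\cdots\cap H_{i_s}$ is a line through the origin, its ideal $I(X)$ is a complete intersection of $k-1$ of the $L_i$'s (any $k-1$ of those indexed by the hyperplanes through $X$ generate it), and two distinct coatoms share at most $k-2$ common hyperplanes. Thus the sets $\mathcal A_X=\{i: X\subseteq H_i\}$ have pairwise intersections of size $\le k-2$, and since $\mathcal A$ is essential every hyperplane lies on at least one coatom; in fact $\sum_{X}(\nu(X)-(k-1))$-type bookkeeping will be needed to see that a product $L_{i_1}\cdots L_{i_{n-k+2}}$ omits enough forms to lie in each $I(X)^{\nu(X)-k+2}$.

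For the inclusion $J\subseteq K$, I would fix a coatom $X$ with $\nu(X)=s$, and a generating product $L_{i_1}\cdots L_{i_{n-k+2}}$ of $J$. Among the chosen indices $i_1,\dots,i_{n-k+2}$, at least $s-(k-2)$ of them correspond to hyperplanes through $X$: indeed the complementary set has size $k-2$, so it can miss at most $k-2$ of the $s$ indices in $\mathcal A_X$. Each such factor $L_{i_j}$ with $i_j\in\mathcal A_X$ lies in $I(X)$, so the product lies in $I(X)^{s-k+2}=I(X)^{\nu(X)-k+2}$. Since this holds for every coatom, $L_{i_1}\cdots L_{i_{n-k+2}}\in K$, and as $K$ is an ideal, $J\subseteq K$.

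For the reverse inclusion $K\subseteq J$, this is the substantive part, and I expect it to be the main obstacle. The natural approach is a Hilbert-function / saturation argument in the spirit of Schenck's original rank-3 proof combined with the Davis–Geramita theory for points (which the introduction flags as the relevant tool). I would first show that $J$ is already saturated, or at least that $J$ and $K$ agree in high degrees, by exhibiting $K$ as the ideal of a fat point scheme: the subscheme $Z=\sum_{X\in L_{k-1}(\mathcal A)}(\nu(X)-k+2)\,p_X$ of $\mathbb P^{k-1}$ supported at the points $p_X$ dual to the coatoms, so $I(Z)=K$. Then one computes, degree by degree, that $\dim_{\mathbb K}(R/J)_t$ equals $\deg Z$ for $t\gg 0$ using the explicit generators of $J$ (products of linear forms), and separately that $R/K=R/I(Z)$ has the same eventual Hilbert polynomial; together with $J\subseteq K$ this forces $J^{sat}=K$. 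The remaining point — that $J$ itself, not merely its saturation, equals $K$ — would follow by checking that $J$ contains no element of $\langle x_1,\dots,x_k\rangle$-torsion, e.g. because each minimal prime of $J$ has the expected height and $J$ is unmixed, which again uses $k-2$ genericity to guarantee that the only associated primes are the $I(X)$ for coatoms $X$. Verifying the Hilbert-function count in arbitrary rank $k$, rather than just $k=3$, is where the genuine work lies, since one must rule out unexpected syzygies among the degree-$(n-k+2)$ products of linear forms; I would attempt this by induction on $n$, deleting a hyperplane and using a deletion–restriction short exact sequence relating $I_{n-k+2}(\mathcal A)$, $I_{n-k+1}(\mathcal A\setminus H)$, and the restricted arrangement.
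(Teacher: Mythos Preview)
Your inclusion $J\subseteq K$ is correct and elementary. The gap is entirely in the reverse direction, and it has two parts.

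First, the saturation step is circular as written: you say $J$ has no $\langle x_1,\dots,x_k\rangle$-torsion ``because each minimal prime of $J$ has the expected height and $J$ is unmixed,'' but knowing the minimal primes does not by itself rule out an embedded $\langle x_1,\dots,x_k\rangle$-primary component, and you offer no mechanism beyond re-asserting unmixedness. The paper's key idea here is to realize the generators of $J$ as the maximal minors of an $(n-k+2)\times n$ matrix of linear forms: take a generic scalar matrix $M$ with all maximal minors nonzero and set $N=M\cdot\operatorname{diag}(L_1,\dots,L_n)$. Since $\operatorname{codim} J=k-1$ equals the expected codimension $n-(n-k+2)+1$, the quotient $R/J$ is a Cohen--Macaulay determinantal ring, whence $\operatorname{Ext}^k(R/J,R)=0$ and there can be no associated prime of codimension $k$. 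This determinantal structure is the missing ingredient.

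Second, your Hilbert-function comparison is only a plan, and the deletion--restriction induction you propose would ultimately have to establish the following local fact: if $L_1,\dots,L_m\in\mathbb K[x_1,\dots,x_{k-1}]$ are linear forms with any $k-1$ of them independent, then the ideal generated by all $(m-k+2)$-fold products of the $L_i$ equals $\langle x_1,\dots,x_{k-1}\rangle^{m-k+2}$. The paper invokes exactly this statement (via a minimum-distance computation together with \cite{toh2}, Theorem~3.1), and uses it after localizing $J$ at each coatom ideal $I(X)$ to identify the $I(X)$-primary component of $J^{sat}$ directly as $I(X)^{\nu(X)-k+2}$. So the paper bypasses any global Hilbert-function count: it determines each primary component by localization and then disposes of the possible embedded component by Cohen--Macaulayness.
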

\begin{proof} Following the same considerations as in Section 4 in \cite{toh2}, any minimal prime of the ideal $I_{n-k+2}(\mathcal A)$ is of the form $$\langle L_{i_1},\ldots,L_{i_{k-1}}\rangle.$$ So all the minimal primes have codimension exactly $k-1$ (since $\mathcal A$ is $k-2$ generic), and they are the ideals of the coatoms of $\mathcal A$.

From this we get: 1) the codimension of $I_{n-k+2}(\mathcal A)$ is $k-1$, and 2) $I_{n-k+2}(\mathcal A)$ might have an embedded prime, the irrelevant ideal $\langle x_1,\ldots,x_k\rangle$. So the primary decomposition of $I_{n-k+2}(\mathcal A)$ is $$I_{n-k+2}(\mathcal A)=Q_1\cap\cdots\cap Q_s\cap J,$$ where $Q_i$ are primary ideals of codimension $k-1$ and $J$ is a $\langle x_1,\ldots,x_k\rangle-$primary ideal of codimension $k$. Since for any two ideals $A,B\subset R$ we have $(A\cap B)^{sat}=A^{sat}\cap B^{sat}$, and because $J^{sat}=R$ we obtain that $$I_{n-k+2}(\mathcal A)^{sat}=Q_1\cap\cdots\cap Q_s.$$ Here we used the fact that $Q_i$ are primary ideals of codimension $k-1$ and therefore $(Q_i)^{sat}=Q_i$. If $f\in (Q_i)^{sat}$ but $f\notin Q_i$, then $\langle x_1,\ldots,x_k\rangle^{n(f)}\cdot f\subseteq Q_i$, for some positive integer $n(f)$. From the definition of the primary ideals, we obtain that $\langle x_1,\ldots,x_k\rangle^u\subseteq Q_i$, for some power $u>0$, which is in contradiction with $codim(Q_i)=k-1$.

To prove our assertion, first we show that $I_{n-k+2}(\mathcal A)^{sat}$ has the primary decomposition described in the statement, by localizations of $I_{n-k+2}(\mathcal A)$ at each of its minimal primes. And then we show that $I_{n-k+2}(\mathcal A)=I_{n-k+2}(\mathcal A)^{sat}$.

\vskip .1in

Let $X\in L_{k-1}(\mathcal A)$ be a coatom. Denote $\nu(X)=m$, and assume that $$X=H_1\cap\cdots\cap H_m,$$ with $I(X)=\langle x_1,\ldots,x_{k-1}\rangle\subset R:=\mathbb K[x_1,\ldots,x_k]$.

If we localize $R$ at $I(X)$, we have that $L_{m+1},\ldots,L_n$ are invertible elements and therefore $$I_{n-k+2}(\mathcal A)R_{I(X)}= \langle \{L_{i_1}\cdots L_{i_{m-k+2}}:1\leq i_1<\cdots<i_j\leq m\}\rangle R_{I(X)}.$$

Consider now the linear code with generating matrix $A$ given by the coefficients of the linear forms $L_1,\ldots,L_m$. These are linear forms in variables $x_1,\ldots,x_{k-1}$. So this linear code has length $m$ and dimension $k-1$. Since any $k-1$ of these linear forms are linearly independent, the maximum number of columns of $A$ that span a $k-2$ dimensional vector space is $k-2$. So, by \cite{tohVan}, Remark 2.3, the minimum distance of this code is $$d=m-(k-2)=m-k+2.$$

From \cite{toh2}, Theorem 3.1, we have that $$\langle \{L_{i_1}\cdots L_{i_{m-k+2}}:1\leq i_1<\cdots<i_j\leq m\}\rangle=\langle x_1,\ldots,x_{k-1}\rangle^{m-k+2}.$$

So we got that $$I_{n-k+2}(\mathcal A)R_{I(X)}=I(X)^{\nu(X)-k+2}R_{I(X)},$$ for all the minimal primes (which are the ideals $I(X)$ of all the coatoms $X$) of $I_{n-k+2}(\mathcal A)$. This means that the primary decomposition of $I_{n-k+2}(\mathcal A)^{sat}$ is the one desired.

\vskip .1in

Any $I_i(\mathcal A)$ can be generated by the maximal minors of a certain matrix. Let $M$ be a $i\times n$ matrix with entries in $\mathbb K$ such that all the $i\times i$ minors are nonzero. There exists such a matrix since, by the way it is defined, the set of all these matrices is an open Zariski set. Then the maximal minors of the $i\times n$ matrix with entries in $R_1$ $$N=M\cdot\left[
\begin{array}{cccc}
L_1&0&\cdots &0 \\
0&L_2&\cdots&0 \\
\vdots &\vdots& &\vdots \\
0&0&\cdots&L_n
\end{array}
\right]$$ are the generators of $I_i(\mathcal A)$.

When $i=n-k+2$, and $\mathcal A$ is $k-2$ generic, the codimension of $I_i(\mathcal A)$ is exactly $(i-i+1)(n-i+1)=k-1$. So $R/I_{n-k+2}(\mathcal A)$ is a determinantal ring which are known to be Cohen-Macaulay (\cite{e0}, Theorem 18.18).

So, the projective dimension of $R/I_{n-k+2}(\mathcal A)$ is $pd(R/I_{n-k+2}(\mathcal A))=codim(I_{n-k+2}(\mathcal A))=k-1$. This means that $Ext^k(R/I_{n-k+2}(\mathcal A),R)=0$ and this gives that $I_{n-k+2}(\mathcal A)$ cannot have an associated prime of codimension $k$. Therefore, $I_{n-k+2}(\mathcal A)=I_{n-k+2}(\mathcal A)^{sat}$. \end{proof}

For rank 3 arrangements, \cite{sch}, Lemma 3.2 presents the graded minimal free resolution for $I_{n-1}(\mathcal A)$. More generally, from the map $R^n\longrightarrow R^i$ with matrix $N$ seen in the proof above, we get a complex of $R-$modules, known as the Eagon-Northcott complex (\cite{e}, Chapter A2H, covers in full details this complex). If $depth(I_i(\mathcal A),R)=n-i+1$, then this complex is exact and therefore it provides a free resolution for $R/I_i(\mathcal A)$.

In our instance, $i=n-k+2$ and since $R/I_{n-k+2}(\mathcal A)$ is Cohen-Macaulay, $k-1$ is the smallest integer $r$ such that $Ext^r(R/I_{n-k+2}(\mathcal A),R)\neq 0$. By \cite{e0}, Proposition 18.4, $$depth(I_{n-k+2}(\mathcal A),R)=k-1=n-(n-k+2)+1,$$ and we have exactness of the Eagon-Northcott complex for the $R-$ module $R/I_{n-k+2}(\mathcal A)$.

\section{The fitting problem and the index of nilpotency}

Let $I$ be an ideal in $R=\mathbb K[x_1,\ldots,x_k]$. The \textit{index of nilpotency} of $I$, denoted $nil(I)$, is the smallest integer $s$ such that $$(\sqrt{I})^s\subseteq I.$$ For a nice exposition about this invariant we recommend \cite{vas}, Chapter 9.2.

The next result determines the index of nilpotency of the ideal of a fat point scheme in $\mathbb P^{k-1}$.

\begin{proposition} \label{index} Let $Z=m_1P_1+\cdots+m_nP_n, m_i\geq 1$ be a fat point scheme in $\mathbb P_{\mathbb K}^{k-1}$. If $I_Z\subset R=\mathbb K[x_1,\ldots,x_k]$ is the ideal of $Z$, then $$nil(I_Z)=\max\{m_1,\ldots,m_n\}.$$
\end{proposition}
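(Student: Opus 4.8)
The plan is to compute $nil(I_Z)$ by localizing at each point and reducing to the local structure of a fat point. Recall that the radical $\sqrt{I_Z}$ is the ideal $I_{Z_{red}}$ of the reduced point set $\{P_1,\ldots,P_n\}$, so the statement asserts that $(I_{Z_{red}})^s\subseteq I_Z$ precisely when $s\geq\max\{m_i\}$. First I would observe that $nil$ can be checked locally: an inclusion of saturated (or just homogeneous) ideals defining subschemes of $\mathbb P^{k-1}$ holds if and only if it holds after localizing at each associated prime, which here are the primes $\mathfrak{p}_i=I(P_i)$. Since the $P_i$ are distinct, in the local ring $R_{\mathfrak{p}_i}$ all the forms vanishing at the other points $P_j$ ($j\neq i$) become units, so $I_Z R_{\mathfrak{p}_i}=\mathfrak{p}_i^{m_i}R_{\mathfrak{p}_i}$ and likewise $\sqrt{I_Z}\,R_{\mathfrak{p}_i}=\mathfrak{p}_iR_{\mathfrak{p}_i}$.

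With this reduction, the problem becomes: in the regular local ring $R_{\mathfrak{p}_i}$ with maximal ideal $\mathfrak{m}_i=\mathfrak{p}_iR_{\mathfrak{p}_i}$, what is the smallest $s$ with $\mathfrak{m}_i^{s}\subseteq\mathfrak{m}_i^{m_i}$? This is visibly $s=m_i$, since $\mathfrak{m}_i^s\subseteq\mathfrak{m}_i^{m_i}$ iff $s\geq m_i$ (one containment is obvious; for the reverse, if $s<m_i$ then $\mathfrak{m}_i^s\not\subseteq\mathfrak{m}_i^{m_i}$ because $\mathfrak{m}_i^s/\mathfrak{m}_i^{m_i}\neq 0$ in the regular — hence graded-integral-domain-associated — setting; equivalently a minimal generator of $\mathfrak{m}_i^s$, a product of $s$ local parameters, cannot lie in $\mathfrak{m}_i^{m_i}$). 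Taking the maximum over $i$ of these local thresholds gives the global threshold, so $nil(I_Z)=\max\{m_1,\ldots,m_n\}$.

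The one point that needs care — and the main (if modest) obstacle — is justifying that $nil$ is detected by localization at the associated primes: that is, $(\sqrt{I_Z})^s\subseteq I_Z$ holds globally iff it holds in each $R_{\mathfrak{p}_i}$. The clean way is to note $I_Z$ is saturated (it is the ideal of a zero-dimensional scheme, $0$-dimensional projective schemes are arithmetically Cohen--Macaulay of depth $1$, or just observe $I_Z=I_Z^{sat}$ by construction), hence $I_Z=\bigcap_i Q_i$ with $Q_i$ the $\mathfrak{p}_i$-primary component, and $(\sqrt{I_Z})^s\subseteq Q_i$ for all $i$ is equivalent to the global inclusion; then each such inclusion is checked after localizing at $\mathfrak{p}_i$ since $Q_i$ is $\mathfrak{p}_i$-primary (so $Q_i=Q_iR_{\mathfrak{p}_i}\cap R$) and $\sqrt{I_Z}$ localizes compatibly. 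I would also remark that since $(\sqrt{I_Z})^s\subseteq\sqrt{I_Z}$ always has the right radical, the containment can only fail at the $\mathfrak{p}_i$, so nothing is lost by restricting attention to those primes. Everything else is the elementary computation in a regular local ring sketched above.
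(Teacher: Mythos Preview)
Your proof is correct and takes a genuinely different route from the paper's. The paper argues the two inequalities separately and explicitly: the inclusion $I_X^m\subseteq I_Z$ gives $nil(I_Z)\leq m$ immediately, and for the reverse it chooses coordinates so that $P_1=[0,\ldots,0,1]$ carries the maximal multiplicity $m$, picks an element $f\in I_X\setminus I_{P_1}^2$, writes $f=\ell\, x_k^d+g$ with $\ell$ linear in $x_1,\ldots,x_{k-1}$ and $g\in I_{P_1}^2$, and then shows by a leading-monomial computation that $f^{m-1}\notin I_{P_1}^m$, contradicting $nil(I_Z)\leq m-1$. You instead package both directions into a single localization step: since $I_Z=\bigcap_i\mathfrak{p}_i^{m_i}$ is an intersection of $\mathfrak{p}_i$-primary ideals, the global inclusion $(\sqrt{I_Z})^s\subseteq I_Z$ holds iff each local inclusion $\mathfrak{m}_i^s\subseteq\mathfrak{m}_i^{m_i}$ holds in the regular local ring $R_{\mathfrak{p}_i}$, where the threshold is visibly $s=m_i$. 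Your argument is cleaner and more structural, and makes transparent why $\max_i m_i$ appears; the paper's argument is more elementary in that it avoids invoking regularity of $R_{\mathfrak{p}_i}$ and the contraction property $Q_i=Q_iR_{\mathfrak{p}_i}\cap R$, trading those for a hands-on monomial computation.
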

\begin{proof} Let $X=\{P_1,\ldots,P_n\}\subset \mathbb P^{k-1}$ be the support of $Z$. Let $I_X\subset R$ be the ideal of $X$. Then $$\sqrt{I_Z}=I_X.$$

Denote with $s=nil(I_Z)$, with $m=\max\{m_1,\ldots,m_n\}$, and with $I_{P_i}$ the ideal of the point $P_i$. Suppose that $m=m_1$ and that $P_1=[0,\ldots,0,1]$.

We have that $$I_Z=I_{P_1}^{m_1}\cap\cdots\cap I_{P_n}^{m_n}$$ and $$I_X=I_{P_1}\cap\cdots\cap I_{P_n}.$$

Obviously, $I_X^m\subseteq I_Z$. Therefore $$m\geq s.$$

\vskip .1in

Suppose $m\geq 2$. Otherwise, $I_Z=I_X$ and therefore $nil(I_Z)=1=m$.

Also, assume that $s\leq m-1$, and let $f\in I_X$ such that $f\notin I_{P_1}^2$. There must exist such an element, otherwise $I_{P_1}^2$ would become a primary component of $I_X$ which contradicts that $I_X$ is a radical ideal.

Since $I_X^s\subseteq I_Z$, then $$f^{m-1}\in I_{P_1}^m.$$ Because $f\in I_{P_1}-I_{P_1}^2$, and since $I_{P_1}=\langle x_1,\ldots,x_{k-1}\rangle$ we have that $$f=x_1g_1+x_2g_2+\cdots+x_{k-1}g_{k-1},$$ with at least one of the polynomials $g_i$ not in $I_{P_1}$.

If $\deg(f)=d+1$, we can assume that $$f=\ell x_k^d+g,$$ where $\ell$ is a linear form in variables $x_1,\ldots,x_{k-1}$, and $g\in I_{P_1}^2$.

Then $$f^{m-1}=\ell^{m-1}x_k^{d(m-1)}+h,$$ where $h=\sum_{b=1}^{m-1}{{m-1}\choose{b}}\ell^{m-1-b}g^bx_k^{d(m-1-b)}$. By the way we constructed $\ell$ and $g$, $$\ell^{m-1-b}g^b\in I_{P_1}^{m-1+b},$$ and since $b\geq 1$, we have that the polynomial $h\in I_{P_1}^m.$

We obtain $\ell^{m-1}x_k^{d(m-1)}\in I_{P_1}^m$, which is a contradiction: the leading monomial of $\ell^{m-1}x_k^{d(m-1)}$ under any monomial order $>$ with $x_1>\cdots>x_{k-1}>x_k$ is $x_i^{m-1}x_k^{d(m-1)}$ for some $1\leq i\leq k-1$, and should belong to the (monomial) ideal $\langle x_1,\ldots,x_{k-1}\rangle^m$.

So $s\geq m$ and therefore $s=m$. \end{proof}

Now we can put together the two propositions to obtain the main result of the notes. First, to a finite set of $n$ points $\Gamma\subset\mathbb P^{k-1}$, not all on a hyperplane, we can associate the central essential (dual) arrangement of $n$ hyperplanes $\mathcal A_{\Gamma}\subset \mathbb K^k$ defined by the vanishing of the linear forms with coefficients the coordinates of the points of $\Gamma$.

\begin{theorem}\label{main} Let $\Gamma\subset \mathbb P_{\mathbb K}^{k-1}$ be a finite $(k-2)-$generic set of $n$ points, not all contained in a hyperplane. Then $$hyp(\Gamma)=nil(I_{n-k+2}(\mathcal A_{\Gamma}))+k-2.$$
\end{theorem}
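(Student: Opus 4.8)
The plan is to combine Proposition~\ref{Schenck} and Proposition~\ref{index} directly. First I would recall that to the set $\Gamma\subset\proj{k-1}$ we have associated the $k-2$ generic central essential arrangement $\mathcal A_\Gamma$ of $n$ hyperplanes in $\K^k$, and that by Proposition~\ref{Schenck} we have the primary-type decomposition
\[
I_{n-k+2}(\mathcal A_\Gamma)=\bigcap_{X\in L_{k-1}(\mathcal A_\Gamma)}I(X)^{\nu(X)-k+2}.
\]
The next step is to reinterpret the right-hand side as the ideal of a fat point scheme in $\proj{k-1}$. The coatoms $X$ of $\mathcal A_\Gamma$ are in bijection with the hyperplanes of $\proj{k-1}$ that contain at least $k-1$ of the points of $\Gamma$ (the duality sending a hyperplane $\{L=0\}$ of $\mathcal A_\Gamma$ to the point of $\proj{k-1}$ with those coordinates, and a flat $\bigcap H_{i_j}$ to the linear space spanned by the corresponding points). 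Under this duality $I(X)$ becomes the (radical, codimension $k-1$) ideal of a point $Q_X\in\proj{k-1}$, and $\nu(X)$ is exactly the number of points of $\Gamma$ lying on the hyperplane dual to $Q_X$. Hence, setting $m_X:=\nu(X)-k+2\geq1$, we get that
\[
I_{n-k+2}(\mathcal A_\Gamma)=I_Z,\qquad Z=\sum_{X\in L_{k-1}(\mathcal A_\Gamma)}m_X\,Q_X,
\]
a genuine fat point scheme in $\proj{k-1}$.

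Now I would apply Proposition~\ref{index} to this $Z$: it gives
\[
nil\!\left(I_{n-k+2}(\mathcal A_\Gamma)\right)=nil(I_Z)=\max_{X\in L_{k-1}(\mathcal A_\Gamma)} m_X=\max_{X}\bigl(\nu(X)-k+2\bigr)=\Bigl(\max_{X}\nu(X)\Bigr)-k+2.
\]
The final step is the identification $\max_{X\in L_{k-1}(\mathcal A_\Gamma)}\nu(X)=hyp(\Gamma)$. By the duality above, $\nu(X)$ for a coatom $X$ equals the number of points of $\Gamma$ on the corresponding hyperplane, so the maximum of $\nu(X)$ over all coatoms is the largest number of points of $\Gamma$ that lie on a hyperplane through at least $k-1$ of them. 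But since $\Gamma$ is $(k-2)$-generic, any hyperplane containing $hyp(\Gamma)\geq k-1$ points of $\Gamma$ automatically contains $k-1$ independent points, so it is dual to a coatom; thus the maximum is exactly $hyp(\Gamma)$. Combining, $hyp(\Gamma)=nil(I_{n-k+2}(\mathcal A_\Gamma))+k-2$, as claimed.

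The step I expect to require the most care is the bookkeeping of the duality between $\mathcal A_\Gamma$ and $\Gamma$: one must check that the coatoms of $\mathcal A_\Gamma$ correspond precisely to the hyperplanes of $\proj{k-1}$ spanned by subsets of $\Gamma$ of size $\geq k-1$ (using $(k-2)$-genericity so that such subsets do span a hyperplane and no ``accidental'' lower-dimensional flats intervene), and that $\nu(X)$ translates to the point-count on the dual hyperplane. Once this dictionary is set up cleanly, the two propositions plug in mechanically. A minor subtlety worth a sentence is the edge case $hyp(\Gamma)=k-1$, where every hyperplane through $k-1$ points contains exactly $k-1$ of them, so every $m_X=1$, $I_{n-k+2}(\mathcal A_\Gamma)$ is already radical, $nil=1$, and the formula reads $hyp(\Gamma)=1+k-2=k-1$, consistent with the $(k-2)$-generic hypothesis.
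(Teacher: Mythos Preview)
Your proposal is correct and follows essentially the same route as the paper: establish the duality identity $hyp(\Gamma)=\max\{\nu(X):X\in L_{k-1}(\mathcal A_\Gamma)\}$, then plug Proposition~\ref{Schenck} into Proposition~\ref{index}. The paper's proof is terser (it dispatches the combination of the two propositions in one sentence), while you spell out the fat point interpretation, the dictionary between coatoms and hyperplanes through $\geq k-1$ points of $\Gamma$, and the edge case $hyp(\Gamma)=k-1$; but the argument is the same.
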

\begin{proof} If $hyp(\Gamma)$ number of points lie on a hyperplane of equation $a_1x_1+\cdots+a_kx_k=0$, then, dually, the corresponding hyperplanes in $\mathcal A_{\Gamma}$ will intersect at the coatom $[a_1,\ldots,a_k]$. So $$hyp(\Gamma)=\max\{\nu(X): X\in L_{k-1}(\mathcal A_{\Gamma})\}.$$

Immediate application of Proposition \ref{index} to Proposition \ref{Schenck} gives the result. \end{proof}

\begin{example}\label{example} We end with a simple example. Let $P_1=(1,0), P_2=(1,1), P_3=(3,-1), P_4=(-3,2)$ be four points in the real plane. Find the maximum number of collinear points, and the equation(s) of the line(s) where they are positioned.

First we projectivize the problem by embedding the affine real plane into $\mathbb P^2$. So we add the extra coordinate $z=1$ to all the points to get $$\Gamma = \{Q_1=[1,0,1],Q_2=[1,1,1],Q_3=[3,-1,1],Q_4=[-3,2,1]\}\subset\mathbb P^2.$$

To find $hyp(\Gamma)$, we create $\mathcal A_{\Gamma}$ defined by the linear forms $$L_1=x+z, L_2=x+y+z, L_3=3x-y+z, L_4=-3x+2y+z,$$ and build $$I_3(\mathcal A_{\Gamma})=\langle L_1L_2L_3,L_1L_2L_4,L_1L_3L_4,L_2L_3L_4\rangle.$$ Also consider $$J=\sqrt{I_3(\mathcal A_{\Gamma})}.$$

With Macaulay 2 (\cite{Mt}), observe that $$I_3(\mathcal A_{\Gamma}):J=\langle y+2z,x+z\rangle\mbox{ and }I_3(\mathcal A_{\Gamma}):J^2=R.$$ From Theorem \ref{main}, this means that $hyp(\Gamma)=3$.

Let $I=I_1^{m_1}\cap\cdots \cap I_s^{m_s}$ be the ideal of a fat point scheme. Denote with $J=\sqrt{I}$ and suppose that $m_1=\cdots=m_p=m$ is the maximum multiplicity of any primary component of $I$. Then $$I:J^{m-1}=I_1\cap\cdots\cap I_p.$$ In the fitting problem setup, the points with ideals $I_1,\ldots,I_p$ correspond (dually) to the hyperplanes containing $hyp(\Gamma)$ number of points of $\Gamma$.

For our example if we do this operation we obtain $\langle y+2z,x+z\rangle$, which is the ideal of the point $[-1,-2,1]$. Dually, we obtained the projective line $$-x-2y+z=0,$$ which after dehomogeneization gives the line in the plane of equation $$x+2y=1.$$ Observe that the points $P_1,P_3$ and $P_4$ are collinear sitting on this line.
\end{example}

\vskip .1in

\noindent\textbf{Acknowledgements:} I wish to thank Graham Denham and Hal Schenck for useful discussions. I am very grateful to the anonymous referee for the important terminology correction and for improving and simplifying the proof of Proposition \ref{index} that allows to drop the restriction on the characteristic of the field considered initially in this result as well as in Theorem \ref{main}.


\bibliographystyle{amsalpha}

\end{document}